\documentclass[10pt, oneside]{amsart}
\usepackage{amsfonts}
\usepackage{verbatim}
\usepackage{hyperref}
\usepackage{tikz}
\usepackage{amsmath,amssymb}

\graphicspath{ {./images/} }
 
\begin{document}
 
\title{Dimensions of Mycielskians of cycles}
\author{Brian Chung}
\address{Palo Alto High School}
\email{brianchungus0721@gmail.com}

\author{Mike Krebs}
\address{Department of Mathematics, California State University --- Los
Angeles}
\email{mkrebs@calstatela.edu}

\begin{abstract}
The Mycielskian is a standard construction studied in many an introductory graph theory course.  It is natural to consider Mycielskians of cycles, some of the simplest of all graphs.  This paper deals with the so-called ``dimension'' of such graphs.  The dimension of a graph $G$ is the smallest positive integer $n$ such that there exists a one-to-one correspondence between the vertices of $G$ and some collection of points in $n$-dimensional Euclidean space such that if two vertices in $G$ are adjacent, then the distance between the corresponding points is $1$.  In previous works, it had been proven that the dimension of the Mycielskian of a $k$-cycle is 3 when $k$ is $3$, $4$, or $5$, and 2 when $k=10$.  In this paper, we answer the question completely.  Namely, we show that the dimension is $3$ when $k\neq 10$, and $2$ when $k=10$.
\end{abstract}
 
\maketitle
 
 
\newtheorem{Mthm}{Main Theorem\!\!\,}
\newtheorem{Thm}{Theorem}[section]
\newtheorem{Prop}[Thm]{Proposition}
\newtheorem{Lem}[Thm]{Lemma}
\newtheorem{Cor}[Thm]{Corollary}
\newtheorem{Guess}[Thm]{Conjecture}
 
\theoremstyle{definition}
\newtheorem{Def}[Thm]{Definition}
 
\theoremstyle{remark}
\newtheorem{Ex}[Thm]{Example}
\newtheorem{Rmk}[Thm]{Remark}
\newtheorem{Not}[Thm]{Notation}
 
\renewcommand{\theThm} {\thesection.\arabic{Thm}}
\renewcommand{\theProp}{\thesection.\arabic{Prop}}
\renewcommand{\theLem}{\thesection.\arabic{Lem}}
\renewcommand{\theCor}{\thesection.\arabic{Cor}}
\renewcommand{\theDef}{\thesection.\arabic{Def}}
\renewcommand{\theGuess}{\thesection.\arabic{Guess}}
\renewcommand{\theEx}{\thesection.\arabic{Ex}}
\renewcommand{\theRmk}{}
\renewcommand{\theMthm}{}
\renewcommand{\theNot}{}
\renewcommand{\thefootnote}{\fnsymbol{footnote}}


\section{Introduction}\label{sec:intro}

\begin{figure}\label{fig:M-of-C-10}
\centering
\begin{tikzpicture}[scale=2.5, every node/.style={font=\small}]
    \def\rp{1}
    \def\ru{1.61803398875}

    \coordinate (F) at (0,0);

    \foreach \j in {0,...,9} {
        \pgfmathsetmacro{\ang}{36*\j}
        \coordinate (P\j) at (\ang:\rp);
        \node[
            inner sep=1pt,
            fill=black,
            circle,
            label={\ang:$A_{\j}'$}  
        ] at (P\j) {};
    }

    \foreach \j in {0,...,9} {
        \pgfmathsetmacro{\ang}{36*\j}
        \coordinate (U\j) at (\ang:\ru);
        \node[
            inner sep=1pt,
            fill=black,
            circle,
            label={\ang:$A_{\j}$}  
        ] at (U\j) {};
    }

    \node[inner sep=1pt, fill=black, circle] at (F) {};
    \node[above=8pt] at (F) {$F$};

    \foreach \j in {0,...,9} {
        \draw (F) -- (P\j);
    }

    \foreach \j in {0,...,9} {
        \pgfmathtruncatemacro{\jp}{mod(\j+1,10)}
        \draw (U\j) -- (U\jp);
    }

    \foreach \j in {0,...,9} {
        \pgfmathtruncatemacro{\jm}{mod(\j+9,10)}
        \pgfmathtruncatemacro{\jp}{mod(\j+1,10)}
        \draw (U\j) -- (P\jm);
        \draw (U\j) -- (P\jp);
    }

\end{tikzpicture}

\caption{A unit-distance embedding of the Mycielskian of a $10$-cycle}
\end{figure}

Consider the image shown in Figure \ref{fig:M-of-C-10}.  Its ``outer ring'' consists of the ten points $A_0,\dots,A_9$.  Arranging them equiangularly around a circle of radius $(1+\sqrt{5})/2$ centered at $F$, we have that the distance from $A_j$ to $A_{j+1}$ is $1$ for all $j$.  (Here the subscript is taken modulo $10$.)  Drawing an edge between two points when the distance between them is $1$, the points $A_0,\dots,A_9$ then form the vertex set of a cycle graph with $10$ vertices, which we denote $C_{10}$.

In the ``inner ring'' in this figure we have then ten points $A_0',\dots,A_9'$.  These we place as shown on the circle of radius $1$ centered at $F$.  Observe that each point $A_j'$ is at distance $1$ to $A_{j-1}$ and $A_{j+1}$, which were precisely the neighbors of $A_j$ in the $10$-cycle formed by the ``outer ring.''

Finally, by our choice of radius, we have that $F$ is at unit distance from each point $A_j'$.  This now accounts for all of the edges shown in Figure \ref{fig:M-of-C-10}.

\vspace{.05in}

Graph theorists will recognize this figure as a special case of the Mycielskian construction.  We begin with a finite graph $G$ with vertex set $A_1,\dots,A_k$.  We then make a ``copy'' $A_1',\dots,A_k'$ of this vertex set, and we declare the neighbors of $A_j'$ to be precisely those $A_m$ adjacent to $A_j$.  Finally, we add in one additional vertex $F$, and an edge between $F$ and every vertex $A_j'$.  The resulting graph is the Mycielskian of $G$, denoted $M(G)$.  (Some authors prefer $\mu(G)$ instead.)

\vspace{.05in}

Figure \ref{fig:M-of-C-10} shows that there is a unit-distance embedding of $M(C_{10})$ in $\mathbb{R}^2$.  For which $k$ is such an embedding possible for $M(C_k)$?  Note that there is no requirement that the vertices $A_0,\dots,A_{k-1}$ (or the vertices $A_0',\dots,A_{k-1}'$) be spaced evenly around a circle.  All vertices may be placed anywhere whatsoever in the plane, so long as edges correspond to unit distances.

More generally, for a given $k$, what is the dimension of $M(C_k)$, that is, the smallest $m$ such that there exists a unit-distance embedding of $M(C_k)$ in $\mathbb{R}^m$?  We denote this by $\text{dim}(M(C_k))$.  (We note that there are some nuances with this definition --- see Section \ref{sec:dim} for the precise details.)

\vspace{.05in}

Some partial results were previously known. The book \cite{soifer2024new}, for example, gives the proof that $\text{dim}(M(C_{10}))=2$; this proof consists of Figure \ref{fig:M-of-C-10} and our discussion of it.    In \cite{hudak2016note}, it is shown that the $\text{dim}(M(C_k))=3$ when $k$ is $3$, $4$, or $5$; and that $\text{dim}(M(C_k))\leq 3$ when $k\geq 6$.  The result for $k=5$ is also contained in \cite{hong2014euclidean}.  A claim is made in \cite{soifer2024new} that $\text{dim}(M(C_k))\geq 3$ for all odd values of $k$; however, no proof is provided.

Our main theorem completely determines $\text{dim}(M(C_n))$ for all $n$.

\begin{Thm}\label{thm:main}
    Let $n\geq 3$ be an integer.  Then:\[
\dim(M(C_n))=\begin{cases}
    2 & \text{ if }n=10\\
    3 & \text{ if }n\neq 10
\end{cases}
\]
\end{Thm}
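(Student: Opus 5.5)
Since Figure~\ref{fig:M-of-C-10} gives $\dim(M(C_{10}))\le 2$, and \cite{hudak2016note} gives $\dim(M(C_n))\le 3$ for all $n$ (equal to $3$ for $n=3,4,5$), the theorem reduces to two claims. First, $\dim(M(C_{10}))\ge 2$. Second, $M(C_n)$ has no planar unit-distance embedding for $n\ge 6$, $n\ne 10$. The first claim is trivial: a collinear embedding is impossible because $F$ would have at most two neighbours on the line, whereas $F$ has $10$ neighbours. So the real content is the planar nonexistence for $n\neq 10$, which I would prove by showing that any planar embedding is forced to be the rigid picture of Figure~\ref{fig:M-of-C-10}.

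The argument I would run is as follows. Suppose the vertices are embedded injectively in $\mathbb{R}^2$ with all edges of unit length, and put $F$ at the origin. Then every $A_j'$ lies on the unit circle. Each $A_j$ is at unit distance from both $A_{j-1}'$ and $A_{j+1}'$. Since these two points are distinct, $A_j$ is one of the at most two points of $(A_{j-1}'+S^1)\cap(A_{j+1}'+S^1)$. Write $A_{j\pm1}'=e^{i\theta_{j\pm1}}$. These two intersection points are the reflections of each other across the line through $0$ and the midpoint of the pair, namely $0$ itself and $e^{i\theta_{j-1}}+e^{i\theta_{j+1}}$.
\begin{itemize}
\item Taking $A_j=0$ collides with $F$, which injectivity forbids.
\item Hence $A_j=e^{i\theta_{j-1}}+e^{i\theta_{j+1}}$, a rhombus vertex.
\end{itemize}
So every outer vertex is determined by the inner ones, $A_j=A_{j-1}'+A_{j+1}'$ (indices mod $n$). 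The remaining conditions are the outer cycle edges $|A_j-A_{j+1}|=1$, that is,
\[
|A_{j-1}'+A_{j+1}'-A_j'-A_{j+2}'|=1 \quad\text{for all } j,
\]
with all $A_j'$ unit complex numbers.

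The core of the proof, and the hardest step, is to analyse this system of $n$ equations in the $n$ angles $\theta_j$ and show that it forces the configuration of Figure~\ref{fig:M-of-C-10}. I would set $u_j=A_{j+1}'-A_{j-1}'$, so each condition reads $|u_j-u_{j+1}|=1$. Geometrically, $|A_{j-1}'+A_{j+1}'|=2\cos(\alpha_j/2)$, where $\alpha_j$ is the angle between $A_{j-1}'$ and $A_{j+1}'$. My first attempt would be to show that all $|A_j|$ are equal and all consecutive angle increments $\theta_{j+1}-\theta_j$ agree. I would try an extremal argument: choose $j$ with $|A_j|$ maximal, and use the unit-edge conditions together with the rhombus structure to propagate equality around the cycle. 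Failing that, I would split into cases according to the even and odd subsequences: the vertices $A_j'$ with $j$ even determine the $A_j$ with $j$ odd, and conversely. I would use the fact that $A_j$ and $A_{j+1}$ lie in the plane at unit distance, with $A_j$ in the sector spanned by $A_{j\pm1}'$, to pin down the increments.

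Once uniformity is established, write $\phi$ for the common increment. Then $|A_j|=2\cos\phi$, and the consecutive outer vertices make angle $\phi$, so the edge condition becomes $2\cdot 2\cos\phi\,\sin(\phi/2)=1$. Closing the cycle requires $n\phi\equiv 0 \pmod{2\pi}$, and injectivity constrains the admissible $\phi$. Solving $4\cos\phi\sin(\phi/2)=1$ gives $\sin(\phi/2)\in\{1/2,\ (-1\pm\sqrt5)/4\}$, that is, $\phi\in\{\pi/3,\ \pi/5,\ 3\pi/5\}$ up to sign.
\begin{itemize}
\item $\phi=\pi/3$ gives $n=6$, but then $A_{j+3}'=-A_j'$ and $A_j=A_{j-1}'+A_{j+1}'=A_j'$ for every $j$, contradicting injectivity.
\item $\phi=3\pi/5$ gives $n=10$ (or a multiple of it), but the inner points then repeat with period $10$, which forces $n=10$ and in fact collapses the configuration.
\item The rational multiples of $\pi$ that arise therefore leave only $n=10$ with $\phi=\pi/5$.
\end{itemize}
Every other $n$ is excluded, which gives $\dim(M(C_n))=3$ for $n\ne 10$.
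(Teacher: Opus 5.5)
You derive $A_j=A'_{j-1}+A'_{j+1}$ correctly (with $F$ at the origin), and your collinearity argument for $\dim\ge 2$ is fine. The gap is the step you yourself call the core. You only sketch strategies (an extremal argument, or a parity split) for showing that the radii $|A_j|$ and the increments $\theta_{j+1}-\theta_j$ are constant.

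No such argument can work as you frame it, because the claim that every planar embedding is the rigid picture of Figure~\ref{fig:M-of-C-10} is false. For $n=10$, take unit vectors $A'_0,\dots,A'_3$ with $|A'_0-A'_1+A'_2-A'_3|=1$. Set $A'_4=-A'_0+A'_1-A'_2+A'_3$, $A'_{k+5}=-A'_k$, and $A_j=A'_{j-1}+A'_{j+1}$. Then $A_{j+1}-A_j=A'_{j+3}$, so every edge has unit length. Near Figure~\ref{fig:M-of-C-10} this is a two-parameter family, modulo rotation, of injective embeddings in which neither the radii nor the increments are constant. Any ``propagation of equality'' that does not already use $n\neq 10$ would therefore prove something false. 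So the uniformity step is not a reduction; it is as hard as the theorem itself.

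Separately, your claim that $\phi=3\pi/5$ collapses is wrong: $A'_j=e^{3\pi i j/5}$ is a valid embedding of $M(C_{10})$. This does not affect the theorem, but it shows the case analysis was not checked.

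The idea you are missing is the second family of $4$-cycles, $A_iA_{i+1}A_{i+2}A'_{i+1}$.
\begin{itemize}
\item The distinct points $A_i$ and $A_{i+2}$ both lie at unit distance from the distinct points $A_{i+1}$ and $A'_{i+1}$. So they are the two intersection points of two unit circles, which gives $A_i+A_{i+2}=A_{i+1}+A'_{i+1}$.
\item Substituting your formula for the $A_j$ turns this into the linear relation $A'_k-A'_{k+1}+A'_{k+2}-A'_{k+3}+A'_{k+4}=0$ for all $k$.
\item Adding the relations for $k$ and $k+1$ gives $A'_{k+5}=-A'_k$, hence $A'_{k+10}=A'_k$, and injectivity forces $n\mid 10$.
\item For $n=5$ this reads $A'_k=-A'_k$, which is impossible for a unit vector.
\end{itemize}
This is exactly what the paper's chain of rhombus vector equations accomplishes. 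It uses only these linear relations and never needs uniformity, which matters because uniformity fails.
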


The remainder of this paper is primarily devoted to making precise all preliminary details in the run-up to this theorem, and then a proof of the theorem.  The method of proof generalizes, and we conclude the paper with a theorem that provides a necessary condition for an arbitrary graph to have dimension $\leq 2$.

\subsection{Elementary notions from graph theory}

In this subsection we provide some graph theory basics; readers familiar with them may want to skip ahead to the next subsection.

A (simple) \emph{graph} consists of two sets $V$ and $E$, where every element of $E$ is a two-element set of $V$.  Elements of $V$ (the \emph{vertex set}) are called \emph{vertices}, and elements of $E$ (the \emph{edge set}) are called \emph{edges}.  We denote the vertex set of a graph $X$ by $V(X)$, and its edge set by $E(X)$.  We visualize vertices as dots, and we visualize an edge $\{a,b\}$ as a line segment or curve with $a$ and $b$ as endpoints.  We say that $a$ and $b$ are \emph{adjacent}, denoted $a\sim b$, if $\{a,b\}\in E$.

The so-called \emph{cycle graphs} $C_n$ comprise one standard family of graphs.  The vertex set of $C_n$ is $\{0,1,\dots,n-1\}$, where $a$ and $b$ are adjacent if and only if $a-b\equiv\pm 1\;(\text{mod }n)$.

Two graphs $G$ and $H$ are \emph{isomorphic} if there is a bijective function $f\colon G\to H$ from the vertex set of one to the vertex set of another such that $a$ and $b$ are adjacent in $G$ if and only if $f(a)$ and $f(b)$ are adjacent in $H$.  We say that a graph $H$ is a \emph{subgraph} of a graph $G$ if $V(H)\subset V(G)$ and $E(H)\subset E(G)$.

\subsection{(Unit-distance) dimension of a graph}\label{sec:dim}

We define $U_n$ to be the graph with $\mathbb{R}^n$ as its vertex set, where two vertices are adjacent if and only if the Euclidean distance between them is $1$.  We say that a graph is a \emph{unit-distance graph} if it is isomorphic to a subgraph of $U_2$.

In \cite{erdos1965dimension}, Erdős, Harary, and Tutte introduce the notion of the ``dimension'' of a graph.  That is, they define the \emph{(unit-distance) dimension} of a graph $G$, denoted $\dim(G)$, to be the minimum positive integer $n$ such that $G$ is a subgraph of $U_n$.  (Although \cite{erdos1965dimension} simply calls this the ``dimension'' as do many other publications that followed suit, we include the parenthetical ``unit-distance'' here to disambiguate this definition from other competing notions of dimension for graphs.)

\begin{Ex}
    For an integer $n\geq 3$, we define the \emph{cycle graph} $C_n$ to be the graph with vertex set $\{1,\dots,n\}$, where two vertices are adjacent if and only if their difference is $\pm 1$ modulo $n$.  It is straightforward to show that no subgraph of $U_1$ is isomorphic to a cycle graph.  However, one obtains a subgraph of $U_2$ isomorphic to $C_n$ by considering a regular $n$-gon of side length $1$.  In this way $C_3$ ``looks like'' an equilateral triangle, $C_4$ a square, and so on.  It follows that $\dim(C_n)=2$.
\end{Ex}

\begin{Ex}
For an integer $n\geq 2$, we define the \emph{complete graph} $K_n$ to be the graph with vertex set $\{1,\dots,n\}$, where every vertex is adjacent to every other vertex.  It is relatively straightforward to prove that $\dim(K_n)=n-1$.  The embedding in $U_{n-1}$ is achieved by taking the vertices of a regular $n$-simplex with side length $1$.  So $K_2$ ``looks like'' a line segment, $K_3$ an equilateral triangle, $K_4$ a regular tetrahedron, and so on.
\end{Ex}

We remark that there is a definition related to that of (unit-distance) dimension, namely, \emph{Euclidean dimension}.  Here, one insists that the subgraph of $U_n$ be an induced subgraph.  But Euclidean dimension is not the subject of the present paper, so we shall say nothing more of it.

\subsection{Unit-distance dimension of Mycielskians of cycles} The ``Mycielskian'' is a standard construction in graph theory.  It was originally used in \cite{Mycielski1955} to construct a family of triangle-free graphs with arbitrarily large chromatic number.  Since then it has been much studied, to put it mildly --- as of the time of writing, Semantic Scholar lists 584 citations for \cite{Mycielski1955}.

\begin{Def}
    Let $G$ be a graph.  We define $M(G)$, the \emph{Mycielskian} of $G$, to be the following graph.  The vertex set of $M(G)$ is $\{F\}\cup (V(G)\times\{0,1\})$, where $F\notin V(G)\times\{0,1\}.$  (We think of the vertex set of $M(G)$ as consisting of two copies of $V(G)$, plus one additional vertex $F$.)  For adjacent vertices $a,b\in V(G)$, we have that $(a,0)\sim(b,1)$ and $(a,1)\sim(b,1)$.  Moreover, we have that $F\sim(a,0)$ for all $a\in V(G)$.  There are no other edges in $M(G)$.
\end{Def}

\begin{figure}[ht]
\centering
\begin{tikzpicture}[scale=1.4, every node/.style={font=\small}]
    \def\yb{0}      
    \def\ym{1.6}    
    \def\yt{3.2}    

    \def\s{1.4}

    \foreach \j in {0,...,4} {
        \coordinate (U\j) at ({\s*\j},\yb);
        \node[inner sep=1pt, fill=black, circle] at (U\j) {};
        \node[below=4pt] at (U\j) {$A_{\j}$};
    }

    \foreach \j in {0,...,4} {
        \coordinate (P\j) at ({\s*\j},\ym);
        \node[inner sep=1pt, fill=black, circle] at (P\j) {};
    }

    \node[above=4pt] at (P0) {$A_0'$};
    \node[above=4pt] at (P1) {$A_1'$};
    \node[right=6pt] at (P2) {$A_2'$}; 
    \node[above=4pt] at (P3) {$A_3'$};
    \node[above=4pt] at (P4) {$A_4'$};

    \coordinate (F) at ({2*\s},\yt);
    \node[inner sep=1pt, fill=black, circle] at (F) {};
    \node[above=4pt] at (F) {$F$};

    \foreach \j in {0,...,4} {
        \draw (F) -- (P\j);
    }

    \foreach \j in {0,...,3} {
        \pgfmathtruncatemacro{\jp}{\j+1}
        \draw (U\j) -- (U\jp);
    }

    \draw[bend left=40] (U4) to (U0);

    \foreach \j in {0,...,4} {
        \pgfmathtruncatemacro{\jm}{mod(\j+4,5)}
        \pgfmathtruncatemacro{\jp}{mod(\j+1,5)}
        \draw (U\j) -- (P\jm);
        \draw (U\j) -- (P\jp);
    }
\end{tikzpicture}
\caption{The Mycielski–Grötzsch graph}
\label{fig:Grotzsch}\end{figure}

\begin{Ex}\label{Ex-C-5} The graph $M(C_5)$, shown in Figure \ref{fig:Grotzsch}, is called the \emph{Mycielski–Gr\"otzsch graph}.  In the labels in that figure, we write $A_j$ for $(j,1)$ and $A'_j$ for $(j,0)$.\end{Ex}

\begin{Ex}\label{Ex-C-10} Figure \ref{fig:M-of-C-10} shows $M(C_{10})$.  As in the previous example, we write $A_j$ for $(j,1)$ and $A'_j$ for $(j,0)$.  Let $\phi=(1+\sqrt{5})/2$.  For convenience, we identify $\mathbb{R}^2$ with $\mathbb{C}$.  We obtain an isomorphism to a subgraph of $U_2$ by mapping $F\mapsto0$, and 
$A_j'\mapsto e^{\pi ij/5}$, and $A_j\mapsto \phi e^{\pi ij/5}$.  The locations of the vertices in Figure \ref{fig:M-of-C-10} were chosen accordingly.\end{Ex}
 
\section{Proof of Theorem \ref{thm:main}}

In this section, we prove Theorem \ref{thm:main}.  We begin with an upper bound.

\begin{Lem}[\cite{hudak2016note}]\label{lem:upper-bound}Let $n\geq 3$ be an integer.  Then $\dim(M(C_n))\leq 3$.
\end{Lem}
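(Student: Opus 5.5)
We will produce an explicit unit-distance embedding of $M(C_n)$ in $\mathbb{R}^3$. The idea is to place $F$ at the origin and all copies $A_j'$ on the unit sphere around $F$. The cycle $A_0,\dots,A_{n-1}$ will lie in a horizontal plane.

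\textbf{Placing the cycle.} Fix a regular $n$-gon of side length $1$ in the plane $z=h$, centered on the $z$-axis, with circumradius $R=1/(2\sin(\pi/n))$. Put
\[
A_j=(R\cos(2\pi j/n),\,R\sin(2\pi j/n),\,h),
\]
so that consecutive $A_j$ are at unit distance. The value of $h$ will be chosen later.

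\textbf{Placing the copies.} The vertex $A_j'$ must be at distance $1$ from $F$, from $A_{j-1}$ and from $A_{j+1}$. So it must lie on three unit spheres, centered at $0$, $A_{j-1}$ and $A_{j+1}$. By symmetry, look for $A_j'$ in the vertical half-plane through the midpoint direction of $A_{j-1}$ and $A_{j+1}$, which is the direction angle $2\pi j/n$.

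The points equidistant from $A_{j-1}$ and $A_{j+1}$ form the plane through the $z$-axis at angle $2\pi j/n$. In that plane, write the unit-distance condition to $A_{j\pm1}$. The projection of $A_{j\pm1}$ onto the plane sits at radial coordinate $\rho=R\cos(2\pi/n)$ and height $h$. Its perpendicular offset is $s=R\sin(2\pi/n)$. So $A_j'$ must lie on the circle about $(\rho,h)$ of radius $\sqrt{1-s^2}$, which needs $s\le1$. It must also lie on the unit circle about the origin.

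Two circles in a plane meet exactly when the distance between their centers lies between the difference and the sum of their radii:
\[
1-\sqrt{1-s^2}\;\le\;\sqrt{\rho^2+h^2}\;\le\;1+\sqrt{1-s^2}.
\]
Because $h$ is free, we can make $\sqrt{\rho^2+h^2}$ any value at least $|\rho|$. A valid $h$ therefore exists as soon as $|\rho|\le 1+\sqrt{1-s^2}$.

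\textbf{Main obstacle: checking the inequalities.} We must verify $s\le 1$ and $|\rho|\le 1+\sqrt{1-s^2}$, and handle small $n$ separately.
\begin{itemize}
\item $n=3$: here $\rho<0$ and things are small, so both conditions hold.
\item $n=4$: $s=R=1/\sqrt2$ and $\rho=0$, which is fine.
\item $n\ge5$: we have $s=\cos(\pi/n)\le 1$. The second condition reads $R\cos(2\pi/n)\le 1+\sin(\pi/n)$. This fails for large $n$, since $R\to\infty$.
\end{itemize}
So for large $n$ the symmetric ansatz breaks down, and the construction must be modified. Use that $M(C_n)$ only needs \emph{some} embedding, not a rotationally symmetric one. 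The cycle $A_0,\dots,A_{n-1}$ need not be a regular polygon. We may fold it as a non-planar closed unit-step polygon all of whose vertices lie within a bounded region; for instance, let it zigzag back and forth on a small spherical band.

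The cleaner route is to choose all $A_j$ on a sphere of radius $r$ about $F$ with $r\le 2$, with $A_{j-1}$ and $A_{j+1}$ at mutual distance less than $2$. Then the three unit spheres about $F$, $A_{j-1}$ and $A_{j+1}$ generically meet in two points, provided the triangle $F A_{j-1} A_{j+1}$ has circumradius at most $1$.

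\textbf{Choosing the general closed walk.} We now need a closed walk of $n$ unit steps on that sphere with every such triangle having circumradius at most $1$. First take $r=1$, so $F$, $A_{j-1}$ and $A_{j+1}$ all lie on the unit sphere. Then the needed circumradius condition is just that the triangle is nondegenerate and small. Any closed unit-step polygon on the unit sphere works, for example a regular one on a suitable latitude circle of radius $\ge 1/(2\sin(\pi/n))$. This fails for large $n$, since latitude circles have radius at most $1$. Instead use a zigzag between two latitudes, with steps of chord length $1$ winding around the sphere. By continuity in the latitudes, we can adjust the walk to close up after exactly $n$ steps for every $n\ge 6$.

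\textbf{Distinctness.} Finally, check that all $2n+1$ points are distinct. Use the freedom to choose between the two intersection points, plus a small perturbation argument, to avoid coincidences.
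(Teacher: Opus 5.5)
\textbf{There is a genuine gap: odd $n\ge 11$ are not covered.}

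Your symmetric ansatz for small $n$ is essentially the paper's three-level layered picture, and it is sound there apart from the distinctness check discussed below. It is worth pinning down exactly where it dies. With $x=\sin(\pi/n)$ and $n\ge 4$, your condition $R\cos(2\pi/n)\le 1+\sin(\pi/n)$ reads $1-2x^2\le 2x+2x^2$, i.e.\ $x\ge(\sqrt5-1)/4=\sin(\pi/10)$. So it holds precisely for $3\le n\le 10$, with equality at $n=10$, which forces $h=0$ and gives back the planar embedding.

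Hence every $n\ge 11$ rests on the zigzag. A closed walk that alternates between two latitudes must have even length: after an odd number of steps you are on the other latitude, and no continuity argument in the latitudes changes that parity. The proposal gives no construction that breaks the alternation, so odd $n\ge 11$ are not handled. For even $n\ge 8$ the zigzag is fine and needs no continuity: with longitude step $\delta=2\pi/n$ and latitudes $\pm\lambda$, consecutive points are at distance $1$ exactly when $\cos^2\lambda=3/\bigl(2(1+\cos\delta)\bigr)$. At $n=6$ this degenerates to the equator.

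\textbf{How to repair it.} Give up convexity rather than symmetry. Place $A_k$ at angle $2\pi mk/n$ with $\gcd(m,n)=1$ and $m/n$ close to $1/2$, for example $m=(n-1)/2$ for odd $n$. Then $R=1/(2\sin(\pi m/n))<1$, and your own criterion becomes $|\cos(2\pi m/n)|\le 2\sin(\pi m/n)\bigl(1+\sin(\pi m/n)\bigr)$. This holds easily: the left side is below $1$ and the right side is above $3$.

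Equivalently, in your unit-sphere version, put this star polygon on a small circle of the unit sphere about $F$. Note that $F$ is the center of that sphere, not a point on it, so the circumradius discussion is beside the point. What you actually have is that $A_j$ itself lies on the unit spheres about $F$, $A_{j-1}$ and $A_{j+1}$. You can therefore take $A_j'$ to be the reflection of $A_j$ in the plane through $F,A_{j-1},A_{j+1}$. It differs from $A_j$ because three distinct points of a small circle never lie on a great circle.

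\textbf{Distinctness.} This cannot be obtained by a ``small perturbation'': once the $A_j$ are placed, each $A_j'$ is determined up to a binary choice. Coincidences must be excluded by choosing $h$ or $m$ explicitly. For instance, $h=0$ at $n=6$ forces $A_j'=A_j$. At $n=4$, the vertices $A_0,A_2,A_0',A_2'$ all have neighbours $A_1,A_3$. In the sphere version they would therefore be four distinct common points of the unit spheres about $F,A_1,A_3$, which is impossible. So your claim that ``any'' closed unit-step polygon on the unit sphere works is false there.
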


We give here the core idea of the proof of Lemma \ref{lem:upper-bound}.  Imagine $M(C_n)$ arranged essentially as in Figure \ref{fig:M-of-C-10}, though with edges not necessarily of unit length.  The vertices are arranged in three layers: an outer ring, comprised of the vertices $A_j$; an inner ring, comprised of the vertices $A_j'$; and the center vertex $F$.  By placing these layers at different ``levels'' in $\mathbb{R}^3$, i.e., with different $z$-coordinates, one obtains a unit-distance embedding.  Figure \ref{fig:M-of-C-7} depicts this construction for $M(C_7)$.  Vertices in the outer ring are shown as black dots; those in the inner ring as grey dots; and $F$ as a hollow dot.

\begin{figure}[ht]
\centering
\begin{tikzpicture}[
    scale=4.3,
    x={(1cm,0cm)},
    y={(0.45cm,0.25cm)},
    z={(0cm,1cm)},
    outervertex/.style={circle, fill=black, draw=black, inner sep=1.4pt},
    innervertex/.style={circle, fill=gray!45, draw=black, inner sep=1.4pt},
    fvertex/.style={circle, fill=white, draw=black, inner sep=1.6pt}
]

\def\n{7}
\pgfmathsetmacro{\ru}{1/(2*sin(180/\n))}
\def\rp{1}
\pgfmathsetmacro{\h}{sqrt(1 - (\ru*\ru + \rp*\rp
    - 2*\ru*\rp*cos(360/\n)))}

\coordinate (F) at (0,0,0);

\foreach \j in {0,...,6} {
    \pgfmathsetmacro{\ang}{360*\j/\n}
    \coordinate (P\j) at ({\rp*cos(\ang)},{\rp*sin(\ang)},0);
    \coordinate (U\j) at ({\ru*cos(\ang)},{\ru*sin(\ang)},\h);
}

\draw[gray!35, dashed] plot[domain=0:360, samples=80]
    ({\rp*cos(\x)},{\rp*sin(\x)},0);

\draw[gray!35, dashed] plot[domain=0:360, samples=80]
    ({\ru*cos(\x)},{\ru*sin(\x)},\h);

\foreach \j in {0,...,6} {
    \draw (F) -- (P\j);
}

\foreach \j in {0,...,6} {
    \pgfmathtruncatemacro{\jp}{mod(\j+1,\n)}
    \draw[thick] (U\j) -- (U\jp);
}

\foreach \j in {0,...,6} {
    \pgfmathtruncatemacro{\jm}{mod(\j+\n-1,\n)}
    \pgfmathtruncatemacro{\jp}{mod(\j+1,\n)}
    \draw (U\j) -- (P\jm);
    \draw (U\j) -- (P\jp);
}

\node[fvertex] at (F) {};
\foreach \j in {0,...,6} {
    \node[innervertex] at (P\j) {};
    \node[outervertex] at (U\j) {};
}

\node[below left=2pt] at (F) {\small $F$};

\node at ({1.55*\ru},{0.15},{\h+0.12})
    {\small outer ring};

\node at ({1.45*\rp},{-0.1},{-0.08})
    {\small inner ring};

\end{tikzpicture}

\caption{A unit-distance embedding of $M(C_7)$ in $\mathbb R^3$}
\label{fig:M-of-C-7}
\end{figure}

We now turn our attention to lower bounds.  It is straightforward to show that for $n\geq 3$, the dimension of $C_n$ is at least $2$, and hence $\dim(M(C_n))\geq 2$.  In Section \ref{sec:intro} we showed that $\dim(M(C_{10}))=2$.  By Lemma \ref{lem:upper-bound}, to complete the proof of Theorem \ref{thm:main}, we must show that for $n\neq 10$, we have that $\dim(M(C_n))\geq 3$.

\begin{proof}[Proof of Theorem \ref{thm:main}]
Suppose $n\geq 3$ is an integer with $n\neq 10$.  For the sake of contradiction, assume that $M(C_n)$ is isomorphic to a subgraph of $U_2$.  By a slight abuse of notation, we identify $M(C_n)$ with that subgraph.  That is, we regard $M(C_n)$ itself as a unit-distance graph.  As in examples \ref{Ex-C-5} and \ref{Ex-C-10}, we write $A_j$ for $(j,1)$ and $A'_j$ for $(j,0)$.

Observe that for all $i\in\mathbb{Z}/n\mathbb{Z}$, we have that $A_{i+1}A'_iFA'_{i+2}$ is a $4$-cycle in $M(C_n)$, as $A_{i+1}\sim A'_i$ and $A'_i\sim F$ and $F\sim A'_{i+2}$ and $A'_{i+2}\sim A_{i+1}$.  Now, $M(C_n)$ is a unit-distance graph, so a $4$-cycle in it is a rhombus of side length $1$.  Because opposite sides of a rhombus are parallel, two vector equations can be made: \begin{equation}\label{eq:1}
\overrightarrow{FA'_i} = \overrightarrow{A'_{i+2}A_{i+1}}\end{equation}\begin{equation}\label{eq:2}\overrightarrow{FA'_{i+2}} = \overrightarrow{A'_{i}A_{i+1}}\end{equation}We have not yet used the condition $n\neq 10$, and indeed, (\ref{eq:1}) and (\ref{eq:2}) can be seen in Figure \ref{fig:M-of-C-10}.  Because (\ref{eq:1}) holds for all $i$, we may replace $i$ with $i+2$ and then combine with (\ref{eq:2}) to get\begin{equation}\label{eq:3}
\overrightarrow{A'_{i+4}A_{i+3}} = \overrightarrow{A'_{i}A_{i+1}}\end{equation}for all $i\in\mathbb{Z}/n\mathbb{Z}$.

Similarly, the $4$-cycle $A_iA_{i+1}A_{i+2}A'_{i+1}$ leads to the equations\begin{equation}\label{eq:4}
\overrightarrow{A_iA_{i+1}} = \overrightarrow{A'_{i+1}A_{i+2}}
\end{equation}and\begin{equation}\label{eq:5}\overrightarrow{A_iA'_{i+1}} = \overrightarrow{A_{i+1}A_{i+2}}\end{equation}for all $i$.  Replacing $i$ with $i+1$ in (\ref{eq:4}) and combining with (\ref{eq:5}) gives us \begin{equation}\label{eq:6}\overrightarrow{A'_{i+2}A_{i+3}} = \overrightarrow{A_iA'_{i+1}}\end{equation}for all $i$.  Replacing $i$ with $i+2$ in (\ref{eq:3}) and substituting into (\ref{eq:6}) gives us \begin{equation}\label{eq:7}\overrightarrow{A'_{i+6}A_{i+5}} = \overrightarrow{A_{i}A'_{i+1}}=-\overrightarrow{A'_{i+1}A_i}\end{equation}for all $i$, where the second equality comes from reversing the direction of the vector.  Replacing $i$ with $i+5$ in (\ref{eq:7}) and then substituting into (\ref{eq:7}), we get \begin{equation}\label{eq:8}\overrightarrow{A'_{i+11}A_{i+10}} =-\overrightarrow{A'_{i+6}A_{i+5}}=\overrightarrow{A'_{i+1}A_i}\end{equation}for all $i$.  Substituting $i+9$ for $i$ in (\ref{eq:1}) and then combining with (\ref{eq:8}), we find that \begin{equation}\label{eq:9}
\overrightarrow{FA'_{i+9}} = \overrightarrow{A'_{i+11}A_{i+10}}=\overrightarrow{A'_{i+1}A_i}\end{equation}for all $i$.  Substituting $i-1$ for $i$ in (\ref{eq:1}) and then combining with (\ref{eq:9}), we find that \begin{equation}\label{eq:10}
\overrightarrow{FA'_{i+9}} =\overrightarrow{FA'_{i-1}}\end{equation}for all $i$.  Taking $i=1$ in (\ref{eq:10}) we get that $\overrightarrow{FA'_{10}} =\overrightarrow{FA'_{0}}$.  This is an equation of unit vectors with the same initial vertex, so it must be that $A'_{10}=A'_0$.  Thus $10\equiv 0\;(\text{mod }n)$, which implies that $n$ has to be a divisor of $10$.  We assumed $n\geq 3$ and $n\neq 10$, so $n=5$.

As a result, $A'_{i+6} = A'_{i+1}$ and $A_{i+5} = A_{i}$.  Equation (\ref{eq:7}) then gives us that $\overrightarrow{A'_{i+1}A_{i}} = -\overrightarrow{A'_{i+1}A_i}$ for all $i\in\mathbb{Z}/5\mathbb{Z}$.  But $\overrightarrow{A'_{i+1}A_{i}}$ is a unit vector, so this is a contradiction.\end{proof}

\section{A generalization}

The method of proof used in Theorem \ref{thm:main} can be generalized, as we now discuss.  Given a graph $G$, we define an auxiliary graph $\text{VecNeg}(G)$ as follows.  We regard $G$ as a directed graph, with a directed edge from $A$ to $B$ whenever $A$ and $B$ are adjacent in $G$.  We denote the edge from $A$ to $B$ by $\overrightarrow{AB}$.  The vertex set of $\text{VecNeg}(G)$  is the set of directed edges of $G$.  Recall that a \emph{walk} in a graph is a sequence $(v_0,v_1,\dots v_n)$ of vertices, where $v_{i-1}$ is adjacent to $v_i$ for all $i=1,\dots,n$.  We sometimes omit the parentheses and commas when writing a walk.  The \emph{length} of the walk $(v_0,v_1,\dots v_n)$ is $n$, and this walk is said to be \emph{closed} if $v_0=v_n$.  The vertices $\overrightarrow{AB}$ and $\overrightarrow{CD}$ of $\text{VecNeg}(G)$ are adjacent if and only if (i) $C=B$ and $D=A$, or (ii) the vertices $A$, $B$, $C$ and $D$ are distinct, and $ABCDA$ is a closed walk of length $4$ in $G$.  Recall that a finite graph is \emph{nonbipartite} if and only if it contains a closed walk of odd length.

\begin{Thm}\label{thm:generalization}
Let $G$ be a finite graph.  Then $\dim G\geq 3$ if either\begin{enumerate}

\item\label{item:first} The graph $\text{VecNeg}(G)$ is nonbipartite, or

\item\label{item:second} For some distinct vertices $F, A, B$ of $G$, where $A$ and $B$ are adjacent to $F$, there exists in $\text{VecNeg}(G)$ a walk of even length from $\overrightarrow{FA}$ to $\overrightarrow{FB}$.
\end{enumerate}
 
\end{Thm}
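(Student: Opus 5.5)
We argue by contradiction, as in the proof of Theorem \ref{thm:main}. Suppose $\dim G\leq 2$. Then $G$ is isomorphic to a subgraph of $U_2$ (if $\dim G=1$, compose with the inclusion $\mathbb{R}\subset\mathbb{R}^2$), so we may regard the vertices of $G$ as points of $\mathbb{R}^2$, with adjacent vertices at distance $1$. Each directed edge $\overrightarrow{AB}$ of $G$ then determines a unit vector $v(\overrightarrow{AB})=B-A\in\mathbb{R}^2$.

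The key observation is that along every edge of $\text{VecNeg}(G)$ the vector $v$ changes sign. For an edge of type (i) this is immediate, since $v(\overrightarrow{BA})=-v(\overrightarrow{AB})$. For type (ii), suppose $A,B,C,D$ are distinct and $ABCDA$ is a closed walk. Then $A,B,C,D$ are four distinct points with $|AB|=|BC|=|CD|=|DA|=1$, so they form a rhombus of side $1$. We must check that this rhombus is nondegenerate. The points $B$ and $D$ both lie on the two unit circles centered at $A$ and $C$. Since $A\neq C$, these circles meet in at most two points, which are reflections of each other across line $AC$, and $B\neq D$ forces $B,D$ to be exactly those two points. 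Hence $ABCD$ is a genuine rhombus, and opposite sides are parallel with $\overrightarrow{AB}=\overrightarrow{DC}$. This gives $v(\overrightarrow{CD})=-v(\overrightarrow{AB})$, which is exactly the sign change. (This is the same step used implicitly for equations (\ref{eq:1}), (\ref{eq:2}), (\ref{eq:4}) and (\ref{eq:5}).) I expect this degenerate-rhombus check to be the only delicate point; distinctness of $A,B,C,D$ is precisely what makes it work.

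By induction on length, if there is a walk of length $\ell$ in $\text{VecNeg}(G)$ from $\overrightarrow{AB}$ to $\overrightarrow{CD}$, then $v(\overrightarrow{CD})=(-1)^\ell v(\overrightarrow{AB})$.

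\textbf{Case (\ref{item:first}).} Since $\text{VecNeg}(G)$ is nonbipartite, it contains a closed walk of odd length based at some $\overrightarrow{AB}$. Then $v(\overrightarrow{AB})=-v(\overrightarrow{AB})$, so $v(\overrightarrow{AB})=0$. This contradicts $|v(\overrightarrow{AB})|=1$.

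\textbf{Case (\ref{item:second}).} An even walk from $\overrightarrow{FA}$ to $\overrightarrow{FB}$ gives $A-F=B-F$, so $A=B$ as points. This contradicts injectivity of the embedding, since $A$ and $B$ are distinct vertices.

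In either case we obtain a contradiction, so $\dim G\geq 3$.
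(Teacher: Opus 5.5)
Your proposal is correct and follows the paper's proof essentially step for step: you show that adjacent vertices of $\text{VecNeg}(G)$ carry opposite unit vectors via the unit rhombus, then track signs along walks by parity to get $v=-v$ in case (1) and $A=B$ in case (2). Your explicit nondegeneracy check (using $A\neq C$ and $B\neq D$ to force $B+D=A+C$) fills in a detail the paper asserts without comment when it says that $ABCD$ forms a rhombus.
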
\begin{proof}
Suppose that $G$ is isomorphic to a subgraph $G'$ of $U_2$.  By a slight abuse of notation, we identify $G$ with $G'$, and every directed edge in $G$ with the corresponding unit vector in $\mathbb{R}^2$.

We first observe that if $\overrightarrow{AB}$ and $\overrightarrow{CD}$ are adjacent in $\text{VecNeg}(G)$, then $\overrightarrow{AB}=-\overrightarrow{CD}$.  (Hence the notation ``VecNeg.'')  Here's why.  If $C=B$ and $D=A$, then the result is immediate.  Otherwise, we have that the vertices $A$, $B$, $C$ and $D$ are distinct, and $ABCDA$ is a closed walk of length $4$ in $G$.  This implies that $ABCD$ forms a rhombus of side length $1$, and the result follows.

If (\ref{item:first}) holds, then there is a closed walk $(e_0,\dots,e_n)$ of odd length in $\text{VecNeg}(G)$.  Here each $e_i$ is a directed edge in $G$.  Using the result of the previous paragraph, we get that $e_{i-1}=-e_i$ for all $i=1,\dots,n$.  Hence $e_0=(-1)^ne_n=(-1)^ne_0$.  This is a contradiction, because $n$ is odd, and $e_0$ is a unit vector.

If (\ref{item:second}) holds, then by similar reasoning we obtain $\overrightarrow{FA}=\overrightarrow{FB}$.  But then $A=B$, a contradiction.\end{proof}

The line of reasoning formalized in the proof of this theorem can be found in \cite{hong2014euclidean}, \cite{globus2019small}, and \cite{alexeev2025erdhosunitdistanceproblem}.

The reader who wishes to do so can recast the proof of Theorem \ref{thm:main} in terms of Theorem \ref{thm:generalization}.  The central idea is to convert chains of vector equations into walks in the graph $\text{VecNeg}(G)$.

We now give an example to illustrate a direct usage of Theorem \ref{thm:generalization}.

\begin{Ex}
For an integer $k\geq 3$, the M\"obius ladder graph $M_k$ is defined as follows.  It has $2k$ vertices, which we call $A_1,\dots,A_k,B_1,\dots,B_k$.  The vertex $A_j$ is adjacent to $B_j$ for all $j=1,\dots k$.  For all $j=1,\dots k-1$, we have that $A_j$ is adjacent to $A_{j+1}$ and $B_j$ is adjacent to $B_{j+1}$.  Finally, $A_1$ is adjacent to $B_k$, and $A_k$ is adjacent to $B_1$.  There are no other edges.  Figure \ref{fig:Mobius} shows the graph $M_5$, which may make apparent the reason for the name ``M\"obius ladder": it resembles a ladder that loops back around on itself but with a half-twist, à la a M\"obius strip.

\begin{figure}[ht]
\centering
\begin{tikzpicture}[
    every node/.style={circle, draw, fill=white, inner sep=2pt},
    scale=1.2
]

\node (A1) at (0,2) {$A_1$};
\node (A2) at (1.5,2) {$A_2$};
\node (A3) at (3,2) {$A_3$};
\node (A4) at (4.5,2) {$A_4$};
\node (A5) at (6,2) {$A_5$};

\node (B1) at (0,0) {$B_1$};
\node (B2) at (1.5,0) {$B_2$};
\node (B3) at (3,0) {$B_3$};
\node (B4) at (4.5,0) {$B_4$};
\node (B5) at (6,0) {$B_5$};

\foreach \j in {1,...,5}
    \draw (A\j) -- (B\j);

\foreach \j/\k in {1/2,2/3,3/4,4/5}
{
    \draw (A\j) -- (A\k);
    \draw (B\j) -- (B\k);
}

\draw (A1) to[out=210,in=150] (B5);
\draw (A5) to[out=-30,in=30] (B1);

\end{tikzpicture}
\caption{The M\"obius ladder graph $M_5$}
\label{fig:Mobius}\end{figure}

In \cite{hong2014euclidean} it is shown that $\dim(M_3)\geq 3$.  (Indeed, that was how the authors learned about the main technique used in this paper.)  We now show that $\dim(M_k)\geq 3$ for all integers $k\geq 3$.  In $\text{VecNeg}(M_k)$, we have a closed walk of length $2k-1$, namely: \[\left(\overrightarrow{A_1B_1},\,\overrightarrow{B_2A_2},\,\overrightarrow{A_2B_2},\,\overrightarrow{B_3A_3},\dots,\overrightarrow{A_{k-1}B_{k-1}},\,\overrightarrow{B_kA_k},\,\overrightarrow{A_kB_k},\,\overrightarrow{A_1B_1}\right).\]To see that this is indeed a walk, note that for $j=1,\dots, k-1$, we have that $\overrightarrow{A_jB_j}$ and $\overrightarrow{B_{j+1}A_{j+1}}$ are adjacent in $\text{VecNeg}(M_k)$ because $(A_j,B_j,B_{j+1},A_{j+1},A_j)$ is a closed walk of length $4$ in $M_k$.  Similarly, $(A_k,B_k,A_1,B_1,A_k)$ is a closed walk of length $4$ in $M_k$, giving us the final step in the walk.  Because $2k-1$ is odd, it follows that $\text{VecNeg}(M_k)$ is nonbipartite.  By Theorem \ref{thm:generalization}, therefore $\dim(M_k)\geq 3$.\end{Ex}
 
\bibliographystyle{amsplain}
\bibliography{mycielski-bib}
 
\end{document}